\documentclass{article}
\usepackage{amsmath,amsfonts,amsthm,amscd,amssymb,latexsym}
\usepackage{enumerate,comment}
\usepackage{graphicx,psfrag}
\newtheorem*{theorem}{Theorem}%[section]
\newtheorem*{prop}{Proposition}%[section]
\theoremstyle{definition}
\newtheorem{defn}{Definition}%[section]
\newtheorem*{exx}{Example}%[section]
\newtheorem*{cor}{Corollary}

\newcommand{\FF}{\ensuremath{\mathbb{F}}}

\renewcommand{\a}{\alpha}
\renewcommand{\b}{\beta}
\renewcommand{\c}{\gamma}

%%%%%%%%%%%%%%%%%%%%%%%%%%%%%%%%
\newcommand{\G}{\Gamma}
\newcommand{\pprime}{{\prime\prime}}
\newcommand{\bs}{\backslash}
%%%%%%%%%%%%%%%%%%%%%%%%%%%%%%%
\newcommand{\be}{\begin{enumerate}}
\newcommand{\ee}{\end{enumerate}}
%%%%%%%%%%%%%%%%%%%%%%%%%%%%%%%%%%%

%\topmargin 0pt
%\advance \topmargin by -\headheight
%\advance \topmargin by -\headsep
%\textheight 8.6in
%\oddsidemargin 0pt
%\evensidemargin \oddsidemargin
%\marginparwidth 0.5in
%\textwidth 6.5in
\textwidth=125 mm %and 
\textheight=195 mm
\title{Square free words as products of commutators.}

\author{\sc Andrew Duncan, Alina Vdovina}
\title{Square free words as products of commutators.
\footnote{{\em 2000 Mathematics Subject Classification:} 
Primary 20E05, Secondary 20F10}
}

\date{\today}
\begin{document}

\maketitle

\begin{abstract}
Elements of the commutator subgroup of a free group $\FF$ can be presented
as values of canonical forms, called Wicks forms. 
We show that, starting from sufficiently high genus $g$, there is a 
sequence of words $w_g$ which can be presented by $f(g)$ distinct Wicks forms, 
where $f(g)>g!$. Moreover we may choose  these
words $w_g$ to be 
square free.  
\end{abstract}

\section*{Introduction}

Let $\FF$ be a free group and let $[\FF,\FF]$ be its commutator subgroup.
We define the {\em genus}  of a word
 $w \in [\FF,\FF]$ to be the least positive integer $g$ such that $w$ 
is a product
 of $g$ commutators in $\FF$. 
Every element of genus $g$ in  $[\FF,\FF]$ can be presented
by non-cancelling substitution in a Wicks form of genus $g$
 ([cf. \cite{C,CE}]) 
as described 
below.  In genus one there is only one Wicks form but for $g \geq 2$ 
there are finitely many Wicks forms  and their number grows quite fast (factorially)
with $g$.
 Thus a  natural question (posed by E.Rips at 
the geometric group theory conference in Anogia in 1996) is  
whether it is possible to find a word of 
genus $g$ which can be presented by non-cancelling substition
in ``many'' Wicks forms.
We show that there is
a sequence of words $w_{2}, w_{3},\ldots $, such that $w_g$ has
genus $g$ and the number of Wicks forms from which it can be
obtained by non-cancelling substitution 
is bounded below by $g!$, when $g$ is sufficiently large.

  Wicks forms are not affected by Dehn twists.
Since, by a classical result of M.~Dehn (\cite{D}), the modular
group is generated by Dehn twists, Wicks forms are invariant
under the action of $Mod(S_g)$. In 
\cite{BF} Bestvina and Feighn consider genus $g$ 
representations (defined below) of words
of genus $g$. They  describe an equivalence relation on representations
in terms of partial Dehn twists, which are a generalisation of Dehn twists. 
They show that for
certain types of word $w_g$ the number of distinct equivalence classes 
of representations
of $w_g$ grows exponetially with $g$.  
It 
would be interesting to know what happens to our Wicks form representations
under fractional Dehn twists.

\section{Wicks forms}
An {\em alphabet} consists of  a countably enumerable set $S$ 
equipped with
a fixed-point free involution $\tau$ and a fixed set of representatives $S^+$ of the
orbits of $\tau$. 
We shall consider words in 
a fixed alphabet  $A=\{a_1^{\pm 1},a_2^{\pm 1},\ldots\}$
of letters
 $a_1,a_2,\ldots$ and their inverses $a_1^{-1},a_2^{-1},\ldots$, where
$\tau(a_i)=a_i^{-1}$ and $\tau(a_i^{-1})=a_i$ and  
 $A^+=\{a_1,a_2, \ldots \}$. 
 Alphabets $B$, $A^\prime$ are defined analagously, replacing $a_i$ by $b_i$ or $a_i^\prime$, in the obvious way. 
A {\em word} 
({\em over } $A$) is an
element of the free monoid $A^*$ on $A$. A {\em cyclic word} ({\em over } $A$)
is the set
of cyclic permutations $[w]$  of a word $w$. 
Words $w$ and $w^\prime$ are said to {\em define the same cyclic word}
if $[w]=[w^\prime]$.
A word $u$ is said to be a {\em factor}
of the cyclic word  $[w]$ if $u$ is a subword of some element of $[w]$. A
word is said to be {\em reduced} if it has no factor of the form $aa^{-1}$ or
$a^{-1}a$, where $a\in A^+$. A word is said to be {\em cyclically reduced} if every element of 
$[w]$ is reduced. An element $a^\epsilon$, where $a\in A^+$ and $\epsilon=\pm 1$, is
said to {\em occur} $n${\em -times} in the word $w$ if 
$w=u_1 a^\epsilon u_2 \cdots u_n a^\epsilon u_{n+1}$, for some elements
$u_1,\ldots ,u_{n+1}$ of $(A\backslash\{a^\epsilon\})^*$: we count the 
number of occurrences of $a^{+1}$ and $a^{-1}$ separately.

\begin{defn}
 An {\em orientable Wicks form\/} is a cyclic word $[w]$ over $A$ 
such that
\be[(i)]
\item\label{wi} if $a^\epsilon\in A$ occurs in $w$ (for $a\in A^+$ and 
$\epsilon\in\{\pm 1\}$)
then $a^{-\epsilon}$ occurs exactly once in $w$;
\item\label{wii}  $w$ is cyclically reduced and 
\item\label{wiii} if $a_i^\epsilon a_j^\delta$ is a factor of $[w]$ then
 $a_j^{-\delta}a_i^{-\epsilon}$ is not a factor of $[w]$.
\ee
\end{defn}
We shall abuse notation by referring to a Wicks form $[w]$ as $w$, when convenient.
 An orientable Wicks form $[w]=[w_1w_2\cdots]$, over $A$, is
 {\em isomorphic\/} to $[w']=[w'_1w'_2\ldots ]$, over the  alphabet $A'$, if
 there exists a bijection $\varphi:A\longrightarrow A'$ 
%which maps $A^+$ to 
%$(A^\prime)^+$ and satisfies 
with $\varphi(a^{-1})=\varphi(a)^{-1}$ such that 
$[\varphi(w)]=[w']$ (where
 $\varphi(w)=\varphi(w_1)\varphi(w_2)\cdots$). The relation 
``is isomorphic to'' is an equivalence relation on the orientable
Wicks forms over $A$.

 If $[w]$ is an orientable Wicks form then 
 $w$, when considered as an element of the free group $\FF$ generated by
 $a_1,a_2,\ldots$, is an element of the commutator subgroup.
 We define the {\em algebraic genus\/} $g_a(w)$ of
 $[w]$ to be the least positive integer $g_a$ such that $w$ is a product
 of $g_a$ commutators in $\FF$.

 The {\em topological genus\/} $g_t(w)$ of an orientable Wicks
 form $w=w_1\cdots w_{2e-1}w_{2e}$ 
is defined to be the topological
 genus of the orientable compact connected surface obtained by
 labelling and orienting the edges of a $2e-$gon (which we
 consider as a subset of the oriented plane) according to
 $w$ and then identifying edges with the same labels (respecting orientation).

\begin{prop}[cf. \cite{C,CE}]
The algebraic genus and the topological genus of an orientable Wicks
 form coincide.
\end{prop}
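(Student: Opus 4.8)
The plan is to prove the two inequalities $g_a(w)\le g_t(w)$ and $g_t(w)\le g_a(w)$ separately, using the standard translation between commutator length and surfaces mapping to a wedge of circles. Explicitly: for an element $u$ of the commutator subgroup of a finite-rank free group $\FF$, $u$ is a product of $g$ commutators in $\FF$ if and only if there is a continuous map $F\colon\Sigma\to X$, where $\Sigma$ is the compact orientable surface of genus $g$ with one boundary circle and $X$ is a wedge of circles with fundamental group $\FF$, such that $F|_{\partial\Sigma}$ represents $u$ up to conjugacy. This follows at once from the one-$2$-cell model of $\Sigma$, whose attaching word is $\big(\prod_{i=1}^{g}[\alpha_i,\beta_i]\big)\gamma^{-1}$ with $\gamma=\partial\Sigma$: given $u=\prod[x_i,y_i]$, define $F$ on the $1$-skeleton by $\alpha_i\mapsto x_i$, $\beta_i\mapsto y_i$, $\gamma\mapsto u$, and extend over the $2$-cell; conversely $F_*$ sends the boundary class $\prod[\alpha_i,\beta_i]$ to $\prod[F_*\alpha_i,F_*\beta_i]$. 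Since $w$, and any commutator expression for it, involve only finitely many generators, $X$ may always be taken to be a finite wedge of circles.

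For $g_a(w)\le g_t(w)$: the surface $S$ obtained by gluing the $2e$-gon is, by definition of topological genus, a closed orientable surface of genus $g_t(w)$, equipped with the cell structure having one vertex, $e=2g_t(w)$ edges and a single $2$-cell attached along $w$; hence its $1$-skeleton is a wedge $X$ of $e$ circles, with fundamental group the free group on the letters occurring in $w$. Remove a small open disc from the interior of the $2$-cell of $S$: what remains is a copy of the genus-$g_t(w)$ surface with one boundary circle, it deformation retracts onto $S^{(1)}=X$, and under that retraction its boundary circle is carried to the attaching loop, which spells $w$. By the translation, $w$ is a product of $g_t(w)$ commutators, so $g_a(w)\le g_t(w)$.

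For $g_t(w)\le g_a(w)$: set $g=g_a(w)$ and fix, via the translation, a map $F\colon\Sigma\to X$ from a genus-$g$ surface with one boundary circle whose boundary spells $w$. Homotope $F$ to be transverse to the midpoint of each circle of $X$. The preimage $N$ of the set of midpoints is then a properly embedded $1$-submanifold of $\Sigma$, and each component of $\Sigma\setminus N$ maps into a single edge of $X$; the standard normal-form surgeries -- discarding circle components of $N$ bounding discs on which $F$ is inessential, eliminating bigons, and exploiting the minimality of $g$ to rule out complementary pieces of positive genus or with extra boundary -- yield a cell structure with a single face, whose boundary cyclic word is an orientable Wicks form $[w']$. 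Conditions (i)--(iii) record exactly what has been arranged: each arc of $N$ is traversed once in each sense along the face (i), no fold survives, reducedness of $w$ being used here (ii), and no bigon survives (iii). Finally one checks that $[w']$ is isomorphic to $[w]$ and that $w'$ is carried by a surface of genus at most $g$, so that $g_t(w)=g_t(w')\le g=g_a(w)$.

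The step I expect to be the main obstacle is the normal-form reduction in the second inequality: one must show that the surgeries can be driven until the complement of $N$ is a single disc -- this is where minimality of $g$ is genuinely used -- and, more delicately, that the cyclic word then read off is \emph{the} given Wicks form $w$, up to isomorphism, rather than merely some Wicks form of the same algebraic genus. These are precisely the points worked out in \cite{C,CE}, from which the proposition is quoted; the first inequality, by contrast, is essentially an Euler-characteristic count combined with the one-$2$-cell picture of a bounded surface.
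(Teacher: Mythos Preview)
The paper does not prove this proposition; it is stated with attribution to \cite{C,CE} and then used as a black box, so there is no in-paper argument to compare against. Your outline is essentially the argument one finds in those references. Two concrete remarks on the write-up.

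First, in the inequality $g_a(w)\le g_t(w)$ you assert that the cell structure on $S$ has one vertex and $e=2g_t(w)$ edges. That is only the minimal-length case; a general orientable Wicks form of genus $g$ has length $2e$ with $4g\le 2e\le 6(2g-1)$, and its $1$-skeleton $\Gamma$ has $v=e-2g+1\ge 1$ vertices. The repair is immediate: define the map $\Gamma\to X$ by sending every vertex to the wedge point and each edge to the circle carrying its label; then $S$ minus an open disc maps to $X$ with boundary reading $w$, and your commutator-length translation applies unchanged.

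Second, the step you correctly flag as delicate---that the Wicks form $[w']$ produced by the normal-form process coincides with the given $[w]$---is precisely where conditions (i) and (iii) of Definition~1 do the work, and you should say so rather than defer entirely to the references. Culler's argument yields a Wicks form $w'$ of topological genus at most $g_a(w)$ together with a non-cancelling substitution $\Phi$ satisfying $\Phi(w')=w$. If some $\Phi(a)$ had length $\ge 2$, say with initial two-letter segment $pq$, then since $a$ and $a^{-1}$ each occur once in $w'$, both $pq$ and $q^{-1}p^{-1}$ would be factors of $[w]$, violating (iii). Hence every $\Phi(a)$ is a single letter; condition (i) on $w$ then forces $\Phi$ to be injective on letters, so $\Phi$ is a relabelling and $[w']\cong[w]$. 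With that sentence inserted, $g_t(w)=g_t(w')\le g_a(w)$ and the proof is complete.
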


 We define the {\em genus\/} $g(w)$ of an orientable
 Wicks form $[w]$ to be $g(w)=g_a(w)=g_t(w)$.

 Consider the orientable compact surface $S$ associated to an orientable 
 Wicks form $w=w_1\cdots w_{2e}$. This surface carries an embedded graph
 $\Gamma\subset S$ such that $S\setminus \Gamma$ is an open polygon
 with $2e$ sides (and hence connected and simply connected).
 Moreover, conditions (\ref{wii}) and (\ref{wiii}) on the Wicks form imply that $\Gamma$ 
 contains no vertices of degree $1$ or $2$ (or equivalently that the
 dual graph of $\Gamma\subset S$ contains no faces which are $1-$gons
 or $2-$gons). This construction also works 
 in the opposite direction: given a graph $\Gamma\subset S$
 with $e$ edges and no vertices of degree $1$ or $2$ on an orientable compact connected surface $S$ of genus $g$
 such that $S\setminus \Gamma$ is connected and simply connected, we get
 an orientable Wicks form of genus $g$ and length $2e$ by labelling and 
 orienting the edges of $\Gamma$ and by cutting $S$ open along the graph
 $\Gamma$. The associated orientable Wicks form is defined as the word
 which appears in this way on the boundary of the resulting polygon
 with $2e$ sides.  Henceforth we identify orientable Wicks
 forms with their associated embedded graphs $\Gamma\subset S$,
 speaking of vertices and edges of orientable Wicks forms.
 
 The formula for the Euler characteristic
 $$\chi(S)=2-2g=v-e+1$$
 (where $v$ denotes the number of vertices and $e$ the number
 of edges in $\Gamma\subset S$) shows that
 an orientable Wicks form of genus $g$ has  length at least $4g$
 (the associated graph has then a unique vertex of degree $4g$
 and $2g$ edges) and   length at most $6(2g-1)$ (the associated
 graph has then $2(2g-1)$ vertices of degree three and
 $3(2g-1)$ edges).

 We call an orientable Wicks form of genus $g$ {\em maximal\/} if it has
 length $6(2g-1)$.
It is convenient to interpret a genus $g$ Wicks form
as an {\it oriented circuit} in the graph $\Gamma$; that is
a circuit which traverses every edge of the graph exactly
twice, once in each direction, but contains no instance of an
edge followed by its reverse
(for more details concerning oriented circuits
see \cite{V}). The dual graph of Wicks form is an ideal triangulation,
as defined by L.Mosher \cite{M}. It is shown in \cite{M} 
that ideal triangulations are invariant under Dehn twists.

Let $g$ be a positive integer,  $A=\{a_1^{\pm 1},\ldots,a_{6g-3}^{\pm 1}\}$ 
and 
$B=\{b_1^{\pm 1},\ldots ,b_{n}^{\pm 1} \}$ 
be  alphabets and let $A^*$ and $B^*$ be the free monoids on 
$A$ and $B$, respectively. 
Define $(a_i^{-1})^{-1}=a_i$ and  
for a word $w=x_1\ldots x_n\in A^*$ with $x_i\in A$ define 
$w^{-1}=x_n^{-1}\ldots x_1^{-1}$.  
Let $\Phi$ be a map from $A$ to $B^*$, such that 
$\Phi(a_i)$ is freely reduced and 
$\Phi(a_i^{-1})=\Phi(a_i)^{-1}$, for all $i$. Then $\Phi$ induces
a map, also denoted $\Phi$, from $A^*$ to $B^*$. 
Let $u\in A^*$ and $w$ be the word obtained from 
$\Phi(u)$ by free reduction. 
Then we say that $(\Phi,u)$ is a {\em representation} of $w$.
If $\Phi(u)$  is cyclically reduced
then $(\Phi,u)$ is said to be a 
\emph{non-cancelling representation} of $w$. 
 
Given a Wicks form $[u]$  of genus $g$   
we say that
$(\Phi, u)$ is a {\em genus} $g$ {\em Wicks representation} 
of $w=\Phi(u)$ if there
is $u^\prime\in [u]$ such that $(\Phi,u^\prime)$ is a representation of $w$.
If $u$ and $v$ are isomorphic genus $g$  Wicks forms then there exists
a genus $g$ Wicks representation $(\Phi,u)$ of $w$ if and only if there exists 
a  genus $g$ Wicks representation $(\Psi,v)$ of $w$.  
\begin{defn}\label{def:mgw}
For $w\in B^*$ we define $M(g,w)$ to be the number of isomorphism 
classes $I$ of maximal 
Wicks forms over $A$, of genus $g$, such that there exists
a non-cancelling Wicks representation $(\Phi,u)$ of $w$, for some $u\in I$. 
\end{defn}

We shall say a word $w\in B^*$ has {\em genus} $g$ if $w$  
has a non-cancelling,  genus $g$, Wicks representation  
but no 
Wicks representation of genus less than $g$. 
If $w$ does not have a non-cancelling 
Wicks representation, of any genus,  we define its genus to be $\infty$. 
A cyclically reduced word $w$ has
genus $g<\infty$ if and only if it represents an element of the commutator subgroup
of  the free group on $\{b_1,\ldots, b_n\}$ (see  \cite{CE}).

\section{Words with Many Representations}
In the sequel, if $u\in A^*$ and $w\in B^*$, for 
alphabets $A$ and $B$, and there exists a non-cancelling representation
$(\Phi,u)$ of $w$ then we shall say that $w$ {\em is obtained from }
 $u$ {\em by non-cancelling substitution}. 
\begin{theorem}\label{thm:main}
There is a sequence $w_2,w_3,w_4,\ldots $ of words over an alphabet $B$ 
of size $24$ 
%such that, %$g\ge 2$,
%and fixed positive constants $C,D$ and $K$ 
 such that 
\be[(i)]
\item $w_g$ has genus $g$ and 
\item  $M(g,w_g)>g!$, for $g>10^{10}$. 
\ee
\end{theorem}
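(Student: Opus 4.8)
The plan is to produce $w_g$ as the image, under a carefully chosen non-cancelling substitution $\Phi$, of a ``model'' maximal Wicks form $U_g$ of genus $g$ that is assembled from $\Theta(g)$ copies of a single fixed gadget joined cyclically. Concretely, one fixes a small orientable Wicks-form ``handle gadget'' $H$ (a fixed surface-with-boundary piece contributing one unit of genus, realised by a fixed trivalent graph with a bounded number of edges), takes $g$ copies $H^{(1)},\dots,H^{(g)}$ of it, and glues them in a cyclic chain through fixed ``connector'' pieces to obtain a maximal Wicks form $U_g$ of genus $g$; that such cyclic chains can be made maximal, and that their number grows factorially, is the source of the factorial growth of Wicks forms mentioned in the introduction. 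One then designs $\Phi$ so that it collapses all the copy-labels to a common set of at most $24$ letters of $B$ and collapses the connectors to a position-independent word, so that $w_g:=\Phi(U_g)$ depends only on the \emph{shape} of the chain, not on how the individual copies are threaded together. The whole argument then reduces to two things: (a) $w_g$ has genus exactly $g$, and (b) $w_g$ is obtained by non-cancelling substitution from more than $g!$ pairwise non-isomorphic maximal Wicks forms.

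For (b): the point is that, once the labels have been collapsed by $\Phi$, a single cyclic word $w_g$ over $B$ can be re-cut into $12g-6$ blocks and these blocks re-paired in many genuinely different ways, each producing a valid genus-$g$ maximal Wicks form whose $\Phi$-image is again $w_g$. I would encode this freedom by a permutation: for $\sigma$ in a suitable subset $\Sigma\subseteq S_g$ of size exceeding $g!$ — in practice one takes $\Sigma$ slightly larger than $S_g$, e.g. by also allowing a bounded local choice inside each gadget, since only ``$>g!$'' is needed and $g$ may be taken large — one builds $U_g^\sigma$ by threading the $i$-th incoming connector into the $\sigma(i)$-th gadget; since the gadgets and connectors are identical as abstract pieces, $\Phi(U_g^\sigma)=w_g$ for every $\sigma$. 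The non-isomorphism $U_g^\sigma\not\cong U_g^{\sigma'}$ for $\sigma\neq\sigma'$ is proved by extracting from the Wicks form (i.e.\ from the embedded trivalent graph $\Gamma\subset S_g$, before collapsing) a combinatorial invariant — roughly, the cyclic pattern in which the handles are linked — which records $\sigma$ up to the symmetries of the chain; one must check that these symmetries form a group of size $o(g!)$, so that the isomorphism classes still number more than $g!$. This bookkeeping, together with verifying that every $U_g^\sigma$ really is a \emph{maximal} (length $6(2g-1)$) orientable Wicks form satisfying (i)--(iii), is the most delicate part of (b).

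For (a): the bound $g(w_g)\le g$ is immediate, since $(\Phi,U_g)$ is a non-cancelling Wicks representation of $w_g$ of genus $g$. The reverse inequality $g(w_g)\ge g$ is the crux of the whole theorem, and it cannot be obtained by any ``linear'' device (a homomorphism to a free, surface, or free-nilpotent group, or an abelianised intersection form), because the alphabet $B$ is fixed of size $24$ while $g\to\infty$, so every such invariant saturates around genus $12$. Instead one argues directly and combinatorially on the cyclic word $w_g$: any non-cancelling Wicks representation $(\Psi,V)$ of $w_g$ gives a way of cutting $w_g$ into blocks and gluing up a surface, and one shows — using the $g$-fold self-similarity of $w_g$ and the rigidity forced by conditions (ii)--(iii) (no $1$- or $2$-gons in the dual, equivalently no vertices of degree $\le 2$) — that no such gluing can ``share'' or ``merge'' the $g$ handle regions, so the genus of $V$ is at least $g$. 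Making this rigidity quantitative is where the hypothesis $g>10^{10}$ enters: the counting estimate comparing the number of admissible threadings to $g!$ in step (b), and the combinatorial lower bound in step (a), each require $g$ past an explicit (large) threshold.

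I expect the main obstacle to be the lower bound $g(w_g)\ge g$ in (a): one must rule out \emph{all} lower-genus Wicks representations of a long, highly repetitive word, and this is precisely the kind of statement that admits no soft proof over a bounded alphabet. A secondary obstacle is the simultaneous balancing act in the construction of $w_g$: the model $U_g$ and the substitution $\Phi$ must be rigid enough that (a) holds and that the threadings $U_g^\sigma$ remain pairwise non-isomorphic, yet flexible enough that all $>g!$ of them collapse to the \emph{same} word $w_g$ over only $24$ letters.
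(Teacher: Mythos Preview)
Your plan diverges from the paper's at the most basic level, and part (b) as you describe it does not close. The paper does \emph{not} construct $w_g$ explicitly, nor does it exhibit any family $\{U_g^\sigma\}$ of Wicks forms. Instead it defines, for \emph{every} maximal genus-$g$ Wicks form $w$, a word $v=v(w)\in B^*$ obtained by non-cancelling substitution: one $4$-colours the vertices of the trivalent graph $\G$ of $w$, passes to the barycentric subdivision, and labels the three half-edges at a colour-$j$ vertex by $\a_j,\b_j,\c_j$ (whence $|B|=2\cdot 3\cdot 4=24$). After a small normalisation the resulting words satisfy rigid local constraints: letters alternate in sign, a positive letter of colour $j$ is followed by a negative letter of colour $\neq j$, and $\b_j^{-1}\a_j,\a_j^{-1}\c_j,\c_j^{-1}\b_j$ are forbidden. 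These constraints have two consequences. First, no length-$3$ subword of $v$ can occur together with its inverse, and a two-line pigeonhole on lengths then shows $v$ has genus exactly $g$; this is the whole of (a). Second, the set $V(g)$ of all words satisfying the constraints has size only $4\cdot 9^{12g-7}$, whereas by \cite{BV} the number $W(g)$ of isomorphism classes of maximal Wicks forms satisfies $W(g)\ge (1/12)^g(6g-4)!/\bigl(g!(3g-2)\bigr)$. Since $w\mapsto v(w)$ is a map from Wicks forms into $V(g)$, pigeonhole gives some $v$ with $M(g,v)\ge W(g)/|V(g)|>g!$ once $g>10^{10}$; that $v$ is $w_g$. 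The threshold $10^{10}$ comes solely from this Stirling comparison, not from any genus argument.

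The gap in your proposal is precisely the tension you flag at the end. If the gadgets $H^{(i)}$ and connectors are genuinely identical and $\Phi$ erases all copy-indices, then permuting them in a cyclic chain yields \emph{isomorphic} Wicks forms, not $g!$ distinct ones, and (b) collapses. Conversely, any threading $\sigma$ that does change the isomorphism type of $U_g^\sigma$ (different linking pattern, different cycle structure in the connection graph) will in general change the cyclic boundary word, so $\Phi(U_g^\sigma)\ne w_g$ and you are no longer representing a single word. You would need a mechanism by which $\sigma$ is visible in the embedded graph $\G\subset S$ yet invisible in the boundary word after applying $\Phi$; nothing in the proposal supplies this, and it is far from clear such a construction exists with only $24$ letters. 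The paper avoids the difficulty entirely by letting the factorial count from \cite{BV} do the work and never naming $w_g$.
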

\begin{proof}
Consider a maximal Wicks form $w$ of genus $g$ and length $12g-6$.
We colour vertices of the graph $\Gamma$ of $w$ so that no edge
is incident to two vertices of the same colour. A straightforward induction 
on the number of vertices 
shows that, for a simple graph  whose vertices have degree at most $d$, vertices
may be coloured in this way using at most  
$d+1$ colours. In our case 
$\G$ has no vertex of degree more than $3$ (in fact is regular of degree $3$) so four
colours $1,2,3,4$ are sufficient.

\begin{comment}
Let $A_1$ and $A_2$ be disjoint copies of $A$ and write $a_i$ for the element
of $A_i$ corresponding to $a\in A$. We construct a new graph $\G^\prime$ on the
surface $S$ associated to $w$, as follows. First orient each edge of $\G$ so that
its label reads $a\in A^+$ (as opposed to $a^{-1}$) when read with orientation.
Suppose that $e$ is an edge with initial and terminal vertices $u$ and $v$, respectively,
and label $a$. Let $v_e$ be  a new vertex and replace $e$ by $2$ new edges $e_1$ and 
$e_2$, where $e_1$ has initial vertex $u$, $e_2$ has initial vertex $v$ and $e_1$ 
and $e_2$ have terminal vertex $v_e$. Label  $e_1$ and $e_2$ with $a_1$ and $a_2$,
respectively, as shown in Figure \ref{fig:splite}. 
\begin{figure}
  \begin{center}
    \psfrag{a}{$a$}
    \psfrag{u}{$u$}
    \psfrag{a1}{$a_1$}
    \psfrag{a2}{$a_2$}
    \psfrag{ve}{$v_e$}
    \includegraphics[scale=0.4]{splite.eps}
  \end{center}
  \caption{Construction of $\G^\prime$}\label{fig:splite}
\end{figure}
Note that,  
by construction, vertices of  $\G^\prime$ are either of 
degree $3$, with all edges outgoing, or of 
degree $2$, with
all edges incoming. 
 Now $w$ is obtained by reading labels on an
oriented circuit of $\G$ and we let $w^\prime\in (A_1\cup A_2)^*$ denote 
the word obtained by reading
the corresponding circuit $C$ of $\G^\prime$. Then 
$w^\prime$ is obtained from $w$ by 
non-cancelling substitution.
\end{comment}

We construct a new labelled oriented graph $\G^\prime$ on the
surface $S$ associated to $w$, as follows. The underlying graph of $\G^\prime$ is
the barycentric subdivison of $\G$. Thus $\G^\prime$ has two types of 
vertex; the original vertices of $\G$ which are of degree $3$, and new vertices
of degree $2$ corresponding to edges of $G$. All edges of $G^\prime$ join a vertex
of degree $3$ to a vertex of degree $2$ and we orient each edge from the degree $3$
vertex to the degree $2$ vertex. The vertices of $\G^\prime$ degree $3$ 
inherit a colouring from the corresponding vertices of $\G$.   
Now let $B=\{\a_j^{\pm 1},\b_j^{\pm 1},\c_j^{\pm 1}|j=1,2,3,4\}$ be an alphabet
disjoint from $A$. 
Let $u$ be a vertex 
of $\G^\prime$ of degree $3$ and colour $j$. 
Choose one of the $3$ edges emanating from $u$ and call it $e_1$. Choose one of the
other edges and call it $e_2$ and call the third $e_3$. 
%Let $x$, $y$ and $z$ be the 
%labels on the edges incident to $u$, which are all outgoing. Thus for some fixed
%$i=1$ or $2$ we have  $x,y,z\in A_i$. 
Label the oriented edges $e_1, e_2$ and $e_3$ coming out of $u$ with $\a_j$, $\b_j$ by $\c_j$,
respectively, as shown in Figure \ref{fig:label}. 
\begin{figure}
  \begin{center}
    \psfrag{aj}{$\a_j$}
    \psfrag{bj}{$\b_j$}
    \psfrag{cj}{$\c_j$}
    \includegraphics[scale=0.4]{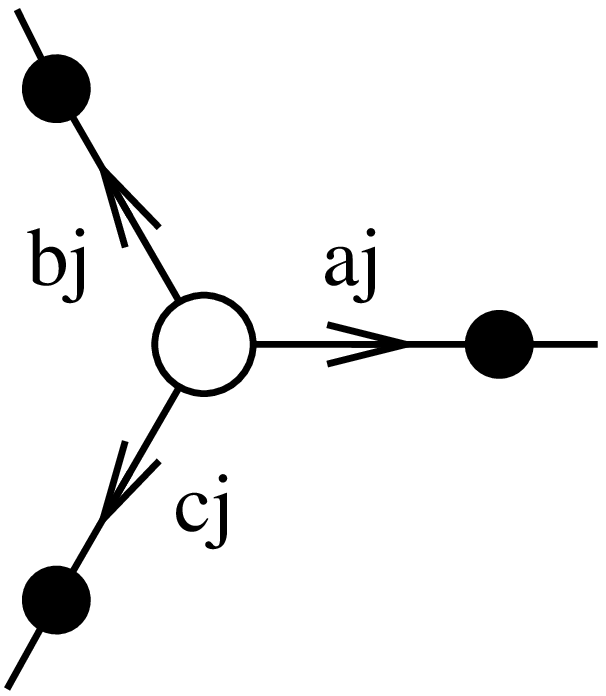}
  \end{center}
  \caption{Labelling of edges of  $\G^\prime$}\label{fig:label}
\end{figure}
Repeat this for all degree $3$ vertices of $\G^\prime$.
\begin{comment}
 and $z$ by $\c_j$, discarding the 
orginal labels $x, y$ and $z$, as shown in Figure \ref{fig:colour}. 
\begin{figure}
  \begin{center}
    \psfrag{aj}{$\a_j$}
    \psfrag{bj}{$\b_j$}
    \psfrag{cj}{$\c_j$}
    \psfrag{x}{$x$}
    \psfrag{y}{$y$}
    \psfrag{z}{$z$}
    \includegraphics[scale=0.4]{colour.eps}
  \end{center}
  \caption{Replacement of labels on $\G^\prime$}\label{fig:colour}
\end{figure}
\end{comment}
Now reading the oriented 
circuit $C$ with this labelling we obtain a word %$w^\pprime 
$v \in B^*$. 
Moreover, the condition on the
colouring of vertices of $\G$ means that %$w^\pprime$ 
$v$ is obtained from $w$ by
non-cancelling substitution. 
%(We could have acheived this without recourse
%to the alphabets $A_1$ and $A_2$. The interim labelling using these 
%alphabets is merely a device to keep track of the construction of the word $w^\pprime$.)
%\begin{comment}

To simplify the estimation of the lower bound below 
we now make a further adjustment to the current 
labelling of $\G^\prime$. Let $u$ be a vertex of degree $3$ %colour $i=1,2,3,4$, 
with outgoing edges labelled 
$\a_j,\b_j,\c_j \in B$. Then either $\a_j^{-1}\b_j$ or $\b_j^{-1}\a_j$ 
is a factor of $w^\pprime$.
Moreover, if $ \a_j^{-1}\b_j$ is a factor of $w^\pprime$ then so are
$\b_j^{-1}\c_j$ and $\c_j^{-1}\a_j$, whereas if $\b_j^{-1}\a_j$ 
is a factor then
so are $\a_j^{-1}\c_j$ and $\c_j^{-1}\b_j$. 
In the latter case we alter the labelling
on the edges incident to $u$ by interchanging labels $\c_j$ and $\b_j$: as 
shown in Figure \ref{fig:adjust}, where the curved arcs indicate the direction
in which the boundary of $S\backslash \G^\prime$ is read.
\begin{figure}
  \begin{center}
    \psfrag{aj}{$\a_j$}
    \psfrag{bj}{$\b_j$}
    \psfrag{cj}{$\c_j$}
    \includegraphics[scale=0.4]{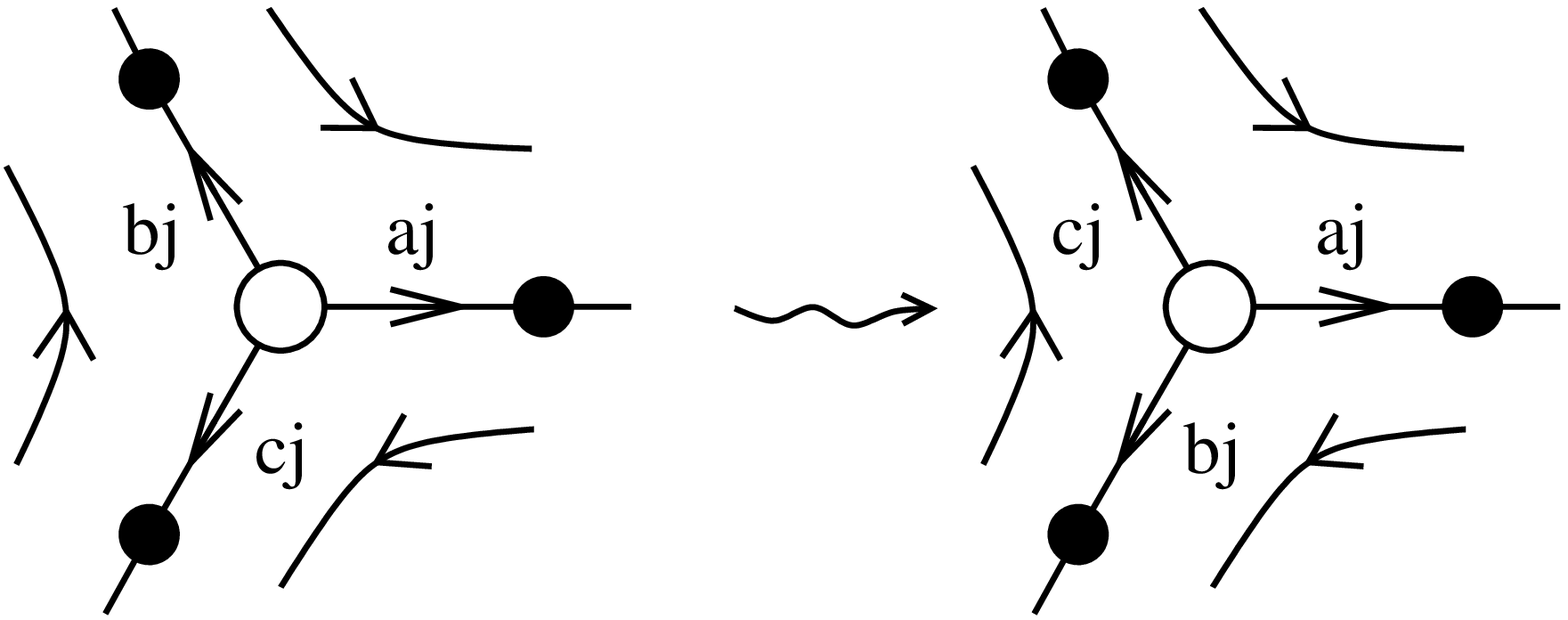}
  \end{center}
  \caption{Adjustment of labels on $\G^\prime$}\label{fig:adjust}
\end{figure} 
The result is
that $\a_j^{-1}\b_j$, $\b_j^{-1}\c_j$ and  $\c_j^{-1}\a_j$ 
are subwords of the (altered) word
$w^\pprime$. We perform this alteration, 
if necessary, on every vertex, of degree $3$, of $\G^\prime$.  
By reading the circuit $C$ with these altered labels we obtain a new 
word $v$; also
obtained from $w$ by non-cancelling substitution. Furthermore this word $v\in B^*$ 
satisfies the
following  conditions.
\be[(i)]
\item\label{it:v1} Every letter of $v$ from $B^+$ of colour $j$ is  followed by a letter of 
 colour $k\neq j$ from $B\bs B^+$. 
\item\label{it:v2}  Every letter of $v$ from $B\bs B^+$ is followed by a letter of the same colour
from $B^+$.
\item\label{it:v3} 
None of 
$\b_j^{-1}\a_j$, 
$\a_j^{-1}\c_j$ or $\c_j^{-1}\b_j$ occur as subwords of $v$.
\item\label{it:v4} The length of $v$ is $24g-12$.
\ee
An example of this construction is given after the proof. 
%\end{comment}

Let's show that $v$ cannot be obtained by a non-cancelling
substitution from a Wicks form of genus $k$, where $k<g$.
Suppose that $(\Phi,u)$ is a genus $k$ non-cancelling 
representation of $v$. Since a Wicks form of genus $k$ has length at most
$12k-6\le 12(g-1)-6$ and $v$ has length $24g-12$ there must be
some letter $a$ occuring in $u$ such that $\Phi(a)>2$.  
Let $a$ be such a letter. Then $\Phi(a)$ contains a subword $xyz$, where
$x,y,z\in B$. As $a^{-1}$ also occurs in $u$ both $xyz$ and  $z^{-1}y^{-1}x^{-1}$ 
must be subwords of $v$. 
However, from \eqref{it:v1} and \eqref{it:v2} above,
 positive and negative powers of letters of $v$ alternate.
Moreover, if $j$ is fixed and $x,y\in \{\a_j,\b_j,\c_j\}$ then   
$v$ may have subword $x^{-1}y$ but does not have subword $xy^{-1}$.
This means that a subword of $v$ of length three always has  form
$x_i^{-1}y_iz_j^{-1}$ or $x_iy_j^{-1}z_j$, where $x_i^{\pm 1},
y_i^{\pm 1},y_j^{\pm 1}$ and $z_j^{\pm 1}$ are elements of $B$.
Therefore $v$  cannot
contain both subwords $xyz$ and $z^{-1}y^{-1}x^{-1}$, and the 
result follows. Hence the genus of $v$ is $g$. %Note that $v$ has 12 generators.

Now we wish to count the number of words $v$ that may arise in this way. First note
that 
we may  read the word $v$ starting from any vertex of $\G^\prime$ and
so we may choose $v$ so that it begins with $\a_j$, for some $j\in \{1,2,3,4\}$.
 Let $V(g)$ denote the set of all freely reduced words of $B^*$ which satisfy
 \eqref{it:v1} to \eqref{it:v4} above and which begin with $\a_j$, for some $j$;
 so $v\in V(g)$. 
If $u$ belongs to $V(g)$ and $x\in \{\a_s, \b_s,\c_s\}$ occurs in $u$ then the
letter following $x$ belongs to $\cup_{i\neq s}\{\a_i^{-1},\b_i^{-1},\c_i^{-1}\}$;
so there are nine possibilities for this letter. If $y$ is a letter of $B\bs B^+$ 
occuring in $u$ then the letter following $y$ is completely determined by conditions
\eqref{it:v2} and \eqref{it:v3} above. Given that there are $4$ choices for the
first letter $\a_j$ of $u$ and that $u$ has length $24g-12$ this means that 
$|V(g)|=4\cdot 9^{12g-7}$.  
 
Now let $M(g)=\max\{M(g,v)|v
\textrm{ is a reduced word of genus } g \textrm{ and length } 24g-12\}$ and 
let $W(g)$ be the number of isomorphism classes of maximal oriented Wicks
forms of genus $g$. Then $|V(g)|M(g)\ge W(g)$. 
However it follows from \cite{BV} that 
%the number of genus $g$ Wicks forms of length $12g-6$ 
$W(g)$ grows faster than $g!$; in fact 
setting 
\[m(g)=\left(\frac{1}{12}\right)^g\frac{(6g-4)!}{g!(3g-2)}\]
we have $W(g)\ge m(g)$. (In \cite{BV} $m(g)=m_1^g$.)
Using upper and lower bounds for $n!$, from \cite{R},
straightforward calculations give
\[
M(g)\ge W(g)/M(g)\ge m(g)/(4\cdot 9^{12g-7})> g!,
\textrm{ for }g>10^{10},
\]
and 
the statement of the theorem follows. 
\end{proof}
\begin{exx}
The quadratic word 
\[w=a_1a_2a_3a_4a_5a_1^{-1}a_6a_2^{-1}a_5^{-1}a_7a_8a_3^{-1}a_6^{-1}a_9a_7^{-1}
a_4^{-1}a_8^{-1}a_9^{-1}\] gives rise to the labelled graph  of Figure \ref{fig:ex1},
embedded on an orientable surface of genus $2$; so this word is a maximal Wicks
form of genus $2$. (The small curved arrows indicate the direction to be taken
at each vertex in reading the circuit $C$.)  
A colouring of the graph with colours $1$, $2$ and $3$ is
also shown in the figure. 
\begin{figure}
  \begin{center}
    \psfrag{a1}{$a_1$}
    \psfrag{a2}{$a_2$}
    \psfrag{a3}{$a_3$}
    \psfrag{a4}{$a_4$}
    \psfrag{a5}{$a_5$}
    \psfrag{a6}{$a_6$}
    \psfrag{a7}{$a_7$}
    \psfrag{a8}{$a_8$}
    \psfrag{a9}{$a_9$}
    \psfrag{1}{$1$}
    \psfrag{2}{$2$}
    \psfrag{3}{$3$}
     \includegraphics[scale=0.4]{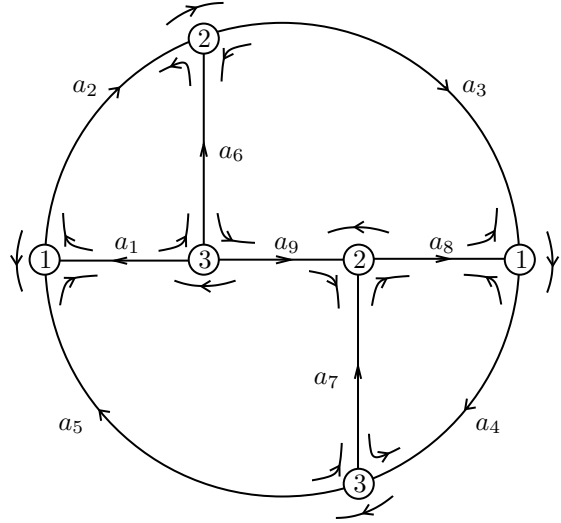}
  \end{center}
  \caption{A maximal Wicks form}\label{fig:ex1}
\end{figure} 
The barycentric subdivision of this graph with its new orientation and
a choice of new labelling is shown in Figure \ref{fig:ex2}. 
\begin{figure}
  \begin{center}
    \psfrag{a1}{$\a_1$}
    \psfrag{a2}{$\a_2$}
    \psfrag{a3}{$\a_3$}
    \psfrag{b1}{$\b_1$}
    \psfrag{b2}{$\b_2$}
    \psfrag{b3}{$\b_3$}
    \psfrag{c1}{$\c_1$}
    \psfrag{c2}{$\c_2$}
    \psfrag{c3}{$\c_3$}
    \psfrag{1}{$1$}
    \psfrag{2}{$2$}
    \psfrag{3}{$3$}
     \includegraphics[scale=0.4]{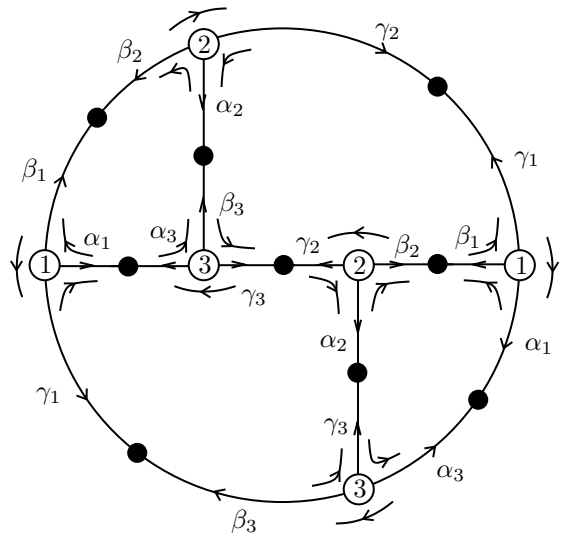}
  \end{center}
  \caption{Barycentric subdivision and new labelling}\label{fig:ex2}
\end{figure} 
Starting by reading the edge labelled $\a_1$ coming out of the rightmost
vertex we obtain the word
\begin{align*}
v=& \a_1\a_3^{-1}\b_3\c_1^{-1}\a_1\a_3^{-1}\b_3\a_2^{-1}\b_2\b_1^{-1}\c_1\b_3^{-1}\c_3
\a_2^{-1}\b_2\b_1^{-1}\c_1\c_2^{-1}\\
 &\a_2\b_3^{-1}\c_3\c_2^{-1}\a_2\c_3^{-1}\a_3\a_1^{-1}\b_1\b_2^{-1}
\c_2\c_3^{-1}\a_3\a_1^{-1}\b_1\b_2^{-1}\c_2\c_1^{-1}.
\end{align*}
No adjustment of the labels is required as $v$ contains none of the  forbidden 
subwords. 
\end{exx}

As mentioned in the introduction the objects counted in \cite{BV} are
equivalence classes of representations of particular words $w_g$; whereas
here we count (isomorphism classes of) representations themselves. It seems
plausible that by choosing the words $w_g$ carefully the size of equivalence
classes could be made small, but we have no proof of any such claim. We can 
however offer some evidence that it is possible, by making a moderate increase in 
the  size of the alphabet, to choose the words $w_g$ from 
a much more limited set than $V(g)$ above. 
A word in $B^*$ is said to be {\em square free} if it contains no
subword of the form $uu$, where $u$ is a non-trivial element of $B^*$.
Such square free words exist (see for example \cite[Chapter 4]{L}).
\begin{cor}
In the theorem above, replacing $B$ with an alphabet of size $80$, 
the words $w_g$ may be chosen to be square free
 and the conclusion of the theorem then holds for $g>10^{17}$.
\end{cor}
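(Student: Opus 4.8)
The plan is to modify the construction in the proof of Theorem~\ref{thm:main} so that the word $v$ attached to a given maximal Wicks form is forced to be square free, at the cost of enlarging the alphabet. The key observation is that the only place in the argument where the specific labelling of $\G^\prime$ matters is (a) in guaranteeing that $v$ is obtained from $w$ by non-cancelling substitution, (b) in the counting estimate $|V(g)|=4\cdot 9^{12g-7}$, and (c) in the genus lower bound, which used only that positive and negative letters alternate and that certain two-letter patterns $x^{-1}y$ with $x,y$ of the same colour are forbidden. So I have freedom to compose the substitution $\Phi$ from the theorem with a further ``square-free encoding'' substitution $\Psi$, provided $\Psi$ preserves cyclic reducedness (non-cancellation) and destroys no genus.

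First I would fix, once and for all, an infinite square free word $s=s_1s_2s_3\cdots$ over a three-letter alphabet $\{x,y,z\}$ (these exist by \cite[Chapter 4]{L}). The alphabet $B$ of the theorem has $24$ letters ($\a_j^{\pm1},\b_j^{\pm1},\c_j^{\pm1}$ for $j=1,\dots,4$); I would replace it by an alphabet $B'$ of size $80$, enough room to give, for each of the $12$ positive letters of $B$, a small block of new letters (roughly three or four new generators per old generator, plus a few spare) out of which square-free ``gadget'' subwords are built. Concretely, to each positive letter $\ell\in B^+$ I assign a freely reduced word $\Psi(\ell)\in (B')^*$ of some fixed length $L$, and set $\Psi(\ell^{-1})=\Psi(\ell)^{-1}$; I then apply $\Psi$ to the word $v$ produced by the theorem's construction and freely reduce, obtaining a new word $w_g'$ over $B'$. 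The blocks $\Psi(\ell)$ should be chosen so that: no $\Psi(\ell)$ is itself a square or contains one; distinct blocks overlap in no nontrivial way (a suffix of one block equals a prefix of another only trivially); and consecutive blocks $\Psi(\ell_1)\Psi(\ell_2)$ never create a square straddling the seam. This is a finite, purely combinatorial design problem — a standard ``morphic image of a square-free word is square free'' type construction (the Thue–Morse / $\mu$-type argument), which is why it can be carried out inside an $80$-letter alphabet.

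With such a $\Psi$ in hand the verification is routine. Non-cancellation: since each $\Psi(\ell)$ is freely reduced and blocks do not cancel against their neighbours, $\Psi(v)$ is cyclically reduced, so $(\Phi\Psi\text{-composite},u)$ is again a non-cancelling representation of $w_g'$, and $w_g'$ is obtained from the same maximal Wicks form by non-cancelling substitution. Genus: the genus of $w_g'$ is still exactly $g$. Indeed $w_g'$ lies in the commutator subgroup (it equals the reduced form of $\Phi(u)$ composed with $\Psi$, an endomorphism of a free group, applied to a commutator), so its genus is at most $g$; and it is at least $g$ because $w_g'$ has length $L\cdot(24g-12)+O(1)$ while a Wicks form of genus $k<g$ has length $\le 12k-6$, and one reruns the ``long letter forces a forbidden three-letter pattern'' argument: the block structure of $\Psi$ lets us recover from any sufficiently long $\Phi(a)$ a genuine occurrence of some $\Psi(\ell)$ together with its reverse $\Psi(\ell)^{-1}=\Psi(\ell^{-1})$ inside $w_g'$, hence an occurrence of the forbidden pair in the original $v$, a contradiction. (Alternatively one keeps exactly the alternation-and-forbidden-pattern bookkeeping of the theorem, now phrased for $B'$.) Square-freeness of $w_g'$ is the content of the block design above, applied to the square-free word $v$ — note $v$ is square free because it is read off a circuit in a graph with no bigon faces, or one simply arranges the theorem's $v$ to be square free as well, which the conditions \eqref{it:v1}--\eqref{it:v3} nearly already force on short scales; in any case $w_g'=\Psi(v)$ with $v$ square free and $\Psi$ a square-free-preserving morphism.

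Finally I redo the counting. The set $V'(g)$ of candidate words now consists of images $\Psi(v)$ for $v\in V(g)$, so $|V'(g)|\le |V(g)|=4\cdot 9^{12g-7}$, and each distinct maximal Wicks form of genus $g$ still yields, via this construction, one such $w_g'$; hence $M(g)\ge W(g)/|V'(g)|\ge m(g)/(4\cdot 9^{12g-7})$. This is the \emph{same} lower bound as in the theorem, and it already exceeds $g!$ for $g>10^{10}$; the weaker threshold $g>10^{17}$ quoted in the corollary simply absorbs the slack coming from the length blow-up factor $L$ and the enlarged alphabet, and the required inequality $m(g)\big/\big(4\cdot(\text{const})^{g}\big)>g!$ follows from the Stirling-type bounds of \cite{R} exactly as before. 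The main obstacle — the only non-formal part — is the explicit construction of the $80$-letter square-free encoding $\Psi$: choosing the blocks $\Psi(\ell)$ with controlled overlaps so that no square is created at block seams while keeping everything freely reduced and within $80$ letters. Once that finite combinatorial lemma is established, the rest is a mechanical rerun of the proof of Theorem~\ref{thm:main}.
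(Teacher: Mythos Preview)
There is a genuine gap. Your whole scheme rests on applying a \emph{fixed} morphism $\Psi:B^*\to(B')^*$ to the word $v$ from the theorem and invoking a ``square-free morphism'' argument. But such an argument only shows that $\Psi$ introduces no \emph{new} squares; it cannot remove squares already present in $v$. If $v$ contains a factor $uu$ then $\Psi(v)$ contains $\Psi(u)\Psi(u)$, full stop. And $v$ has no reason to be square free: it is a word of length $24g-12$ over a fixed $24$-letter alphabet, obtained from $w$ by a many-to-one relabelling of edges, so for large $g$ repeated blocks are unavoidable in general. Your justification (``$v$ is square free because it is read off a circuit in a graph with no bigon faces'') confuses the Wicks-form condition (iii), which only forbids certain \emph{two-letter} patterns in $w$, with square-freeness of the relabelled word $v$; and the hedge that conditions \eqref{it:v1}--\eqref{it:v3} ``nearly already force'' square-freeness is not an argument --- those conditions constrain only adjacent pairs of letters.

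The paper's proof avoids exactly this trap by \emph{not} using a morphism. Instead, for each positive letter $x_i\in B$ it takes a fresh three-letter alphabet $E(x_i)$ and replaces the successive occurrences of $x_i$ in $v$ by the successive letters of a square-free word over $E(x_i)$ (with a special marker for the first occurrence). Different occurrences of the same $x_i$ are thus sent to different letters, and any putative square in the resulting word $z$ projects, letter-type by letter-type, to a square in one of the chosen square-free sequences --- contradiction. This extra freedom (three choices at each positive position) is precisely what enlarges the count from $4\cdot 9^{12g-7}$ to $4\cdot 27^{12g-7}$ and pushes the threshold from $10^{10}$ to $10^{17}$; your remark that the $10^{17}$ ``simply absorbs the slack'' from a length blow-up is therefore off target as well.
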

\begin{proof}
Let $w$ of be a maximal Wicks form of  genus $g$ and length $12g-6$ and,
 as in the proof of the theorem, let $v$ be the label of the oriented circuit
$C$ on the surface corresponding to $w$ with the labelling constructed in the
proof. 
Now for each letter $x_j\in \{\a_j,\b_j,\c_j\}\subseteq B$,   where 
$j=1,2,3$ or $4$, 
define a set $E^+(x_j)=\{x_{j,1},x_{j,2},x_{j,3}\}$ and 
define $E(x_j)=E^+(x_j)\cup \{x^{-1}|x\in E^+(x_j)\}$ and 
$E=\cup_{j=1}^4 (E(\a_j)\cup E(\b_j)\cup E(\c_j))$.
Further let 
$E_S=\{x_{1,0}^{\pm 1}, x_{2,0}^{\pm 1}, x_{3,0}^{\pm 1},x_{4,0}^{\pm 1}\}$ and 
let $\hat E=E\cup E_S$.

Fix some element $x_i^{+1}\in B$ which occurs in $v$.
If $x_i$ occurs $s$ times in $v$,
then $v$ can be written as $v=u_1x_iu_2x_iu_3\ldots x_iu_sx_iu_{s+1}$
where the words $u_1,u_2, \ldots,u_s,u_{s+1}$ don't contain
$x_i$ (they contain $x_i^{-1}$, of course, but we are interested
in positive powers of $x_i$ only).
 Now $v$ is transformed as follows. Choose a square free word 
$t_1\cdots t_{s-1}$ of length $s-1$ in $E(x_i)^*$. 
Replace the first occurrence of $x_i$ with $x_{i,0}$ and replace
the $j$th occurrence of $x_i$ with $t_{j-1}$, for $j=2,\ldots, s$.
Reading the oriented circuit $C$ each occurrence of $x_i$ is paired to an
occurrence of $x_i^{-1}$ in $v$; as each edge is read in both directions. 
The occurrence $x_i^{-1}$ paired to the $j$th occurrence of $x_i$ is now replaced
by $x_{i,0}^{-1}$, if $j=1$, and by $t_{j-1}^{-1}$, otherwise.
(The special letter $x_{i,0}$ is introduced to ensure that all cyclic
permutations of the resulting word are 
square free, as we are dealing with cyclic words.)
Repeat this transformation with respect to every
letter of positive exponent occurring in  $v$
and denote
the resulting word by $z$. Then $z\in \hat E^*$ is obtained by non-cancelling substitution
from $w$.

It is easy to see that $z$ is square free as follows.
Suppose not: then $z$ has a subword $uu$.  Let $u=r_1r_2...r_p$, where $r_i\in E$.
Set $j_1=1$ if $r_1$ 
is an element of $E$ of exponent $+1$ and set $j_1=2$ otherwise.
Since $z$ consists of alternating
positive and negative powers then $r_{j_1}$ will be a positive power.
Then $r_{j_1}=e_i\in E(x_i)$, for some $i$. Suppose that there are $p$ occurrences
of $e_i$ in $u$, namely $r_{j_1},\ldots, r_{j_p}$, with $j_1<\cdots <j_p$.
Since $uu$ is a subword of $z$ the word $r_{j_1}\cdots r_{j_p}r_{j_1}\cdots r_{j_p}$
is a subword of the word $t=t_1\cdots t_{s-1}$ used to construct $z$, above. This 
condradicts the choice of $t$ as a square free word. 
Hence $z$ is square free as claimed.

The argument of the proof of the theorem shows that $z$ has genus $g$. 
As $v$ begins with $\a_j$ the first letter of $z$ is $x_{j,0}$ and so there 
are $4$ possibilities for this letter. Each other letter of $z$ of positive
exponent is obtained by substition of one of the three elements of $E(x_j)$ for
$x_j$, for some $x_j\in B^+$; or is the first letter of its kind in $z$ in which
case it is a uniquely determined element of $E_S$. It follows that there are at most 
$4\cdot 27^{12g-7}$ possibilities for $z$.   
The result now follows. 
\end{proof}

\noindent{\bf Acknowledgements} The authors are indebted to 
A.G.~Miasnikov for his useful 
comments and suggestions on this work.

{\small
{Andrew Duncan}\\
%{School of Mathematics and Statistics, Newcastle University,
%Newcastle upon Tyne, NE1 7RU, United Kingdom}\\[1em]
email: a.duncan@ncl.ac.uk\\[1em]
%\author{Alexei Miasnikov}
%\address{Department of Mathematics and Statistics, McGill University,
%Burnside Hall, Room 1005,
%805 Sherbrooke Street West,
%Montreal, Quebec,
%Canada,
%H3A 2K6 
%}
%\curraddr{}
%\email{amiasnikov@gmail.com>}
{Alina Vdovina}\\
email: {Alina.Vdovina@newcastle.ac.uk}\\[1em]
{School of Mathematics and Statistics, Newcastle University,
Newcastle upon Tyne, NE1 7RU, United Kingdom}
}
\end{document}